    \newtheorem{thm}{Theorem}[section]
    \newtheorem*{main}{Main Theorem}
    \newtheorem{cor}[thm]{Corollary}
    \newtheorem{prop}[thm]{Proposition}
    \newtheorem{lem}[thm]{Lemma}
    \theoremstyle{definition}
    \theoremstyle{remark}
    \newtheorem{rem}[thm]{Remark}
    \newcommand{\Z}{\mathbb{Z}}
    \newcommand{\R}{\mathbb{R}}
    \newcommand{\C}{\mathbb{C}}
    \newcommand{\Ocal}{\mathcal{O}}
    \newcommand{\Ucal}{\mathcal{U}}
    \newcommand{\ubf}{\mathbf{u}}
    \newcommand{\vbf}{\mathbf{v}}
    \newcommand{\wbf}{\mathbf{w}}
    \newcommand{\Coh}{\mathrm{Coh}}
    \newcommand{\NS}{\mathrm{NS}}
    \newcommand{\Pic}{\mathrm{Pic}}
    \newcommand{\Mov}{\mathrm{Mov}}
    \newcommand{\Hilb}[1]{\mathrm{Hilb}^{#1}}
    \newcommand{\Db}{\mathrm{D}^b}
    \newcommand{\Stab}{\mathrm{Stab}}
    \newcommand*{\rom}[1]{\expandafter\@slowromancap\romannumeral #1@}
    \let\c@equation\c@thm
    \numberwithin{equation}{section}
\title[Birational Geometry of Beauville-Mukai systems III]{Birational Geometry of Beauville-Mukai systems III: Asymptotic behavior }
\author{Xuqiang Qin and Justin Sawon}
\address{Department of Mathematics, University of North Carolina, Chapel Hill, NC 27599-3250, USA}
\email{qinx@unc.edu, sawon@email.unc.edu}
\date{October 2022}
\subjclass[2020]{14D06, 14D20, 14E30, 14J28, 14J42}
\keywords{Beauville-Mukai systems, Hilbert schemes, birational geometry, Lagrangian fibrations, Bridgeland stability conditions.}
\pgfplotsset{compat=1.17} 
\begin{document}
\begin{abstract}
Suppose that a Hilbert scheme of points on a K3 surface $S$ of Picard rank one admits a rational Lagrangian fibration. We show that if the degree of the surface is sufficiently large compared to the number of points, then the Hilbert scheme is the unique hyperk{\"a}hler manifold in its birational class. In particular, the Hilbert scheme is a Lagrangian fibration itself, which we realize as coming from a (twisted) Beauville-Mukai system on a Fourier-Mukai partner of $S$. We also show that when the degree of the surface is small our method can be used to find all birational models of the Hilbert scheme.
\end{abstract}

\maketitle
\section{Introduction}
The Hyperk{\"a}hler SYZ Conjecture~\cite{Huy03,Saw03} proposes necessary and sufficient conditions under which a compact hyperk{\"a}hler admits a rational Lagrangian fibration. In the case of the Hilbert scheme of $N+1$ points on a K3 surface $S$ of degree $2d$, it states that $\Hilb{N+1}(S)$ is birational to a Lagrangian fibration if and only if $Nd$ is a perfect square; this was verified in special cases in~\cite{Mar06,Saw07}, and ultimately proved in greater generality in~\cite{BM14b}.

In our previous papers~\cite{QS22a,QS22b} we investigated the cases when $N$ is of the form $dn^2$ for a positive integer $n$. In those cases $\Hilb{N+1}(S)$ is birational (but not isomorphic) to a \emph{Beauville-Mukai system} on $S$~\cite{Bea91,Muk84}, a certain moduli space of pure dimension one sheaves on $S$, which admits the structure of a Lagrangian fibration. Moreover, the birational map can be decomposed into a sequence of stratified Mukai flops, where each flop yields a new \emph{birational model}, a compact hyperk\"ahler manifold birational to $\Hilb{N+1}(S)$.

In this paper we investigate the general case. This includes the following situation which was investigate by Markushevich~\cite{Mar06} and the second author~\cite{Saw07}: instead of having $N/d$ be a perfect square as in the previous paragraph, they looked at the cases when $d/N$ is a perfect square.
\begin{thm}\cite{Mar06,Saw07}\label{MS}
Let $S$ be a K3 surface with $\Pic(S)=\Z\cdot H$ and $H^2=2Nm^2$, where $N$, $m\in \Z^+$ and $m\geq 2$. Then $\Hilb{N+1}(S)$ is a Lagrangian fibration.
\end{thm}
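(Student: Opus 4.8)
The hypothesis gives $d:=\tfrac12 H^2=Nm^2$, so $Nd=(Nm)^2$ is a perfect square and the SYZ criterion recalled above already shows that $\Hilb{N+1}(S)$ is \emph{birational} to a Lagrangian fibration. The content of the statement is the stronger assertion that $\Hilb{N+1}(S)$ is \emph{itself} a fibration, i.e.\ that the relevant isotropic class is nef and semiample. The plan is to produce the fibration by hand, realizing $\Hilb{N+1}(S)$ as a moduli space of sheaves and transporting it through a Fourier--Mukai equivalence to a moduli space of pure one-dimensional sheaves on a Fourier--Mukai partner $\hat S$, where the support map is manifestly Lagrangian. Writing the Mukai vector of an ideal sheaf of $N+1$ points as $v=(1,0,-N)$, so that $\langle v,v\rangle=2N$ and $\dim\Hilb{N+1}(S)=2N+2$, we have $\Hilb{N+1}(S)=M^H_S(v)$ for Gieseker stability with respect to the unique polarization $H$. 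First I would look for a primitive isotropic Mukai vector $w$ of positive rank with $\langle v,w\rangle=0$: such a $w$ produces a K3 surface $\hat S=M_S(w)$, and the induced derived equivalence sends $v$ to a rank-zero class on $\hat S$. The two conditions $\langle w,w\rangle=0$ and $\langle v,w\rangle=0$, together with $\Pic(S)=\Z\cdot H$ and $H^2=2Nm^2$, are solved by $w=(m,H,mN)$; this is where the hypothesis $H^2=2Nm^2$ enters essentially.

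Next I would analyze $\hat S=M_S(w)$. Its Picard lattice is $w^{\perp}/\Z w$ inside the algebraic Mukai lattice $\widetilde H(S,\Z)$, and a direct computation shows $\Pic(\hat S)\cong\Z\cdot\hat H$ with $\hat H^2=2N$, the image of $v$ furnishing the generator. Letting $\phi\colon\widetilde H(S)\to\widetilde H(\hat S)$ be the Hodge isometry induced by the equivalence, the class $\hat v:=\phi(v)$ has rank $\mp\langle v,w\rangle=0$, so $\hat v=(0,\hat\beta,\hat s)$ with $\hat\beta^2=\langle\hat v,\hat v\rangle=2N$; hence $\hat\beta=\hat H$, and Riemann--Roch gives $\dim|\hat H|=\tfrac12\hat H^2+1=N+1$. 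Thus $M_{\hat S}(\hat v)$ is a Beauville--Mukai system whose support map $M_{\hat S}(\hat v)\to|\hat H|\cong\Pbb^{N+1}$ is a Lagrangian fibration of exactly the right dimension; the corresponding isotropic class on $\Hilb{N+1}(S)$ is $H-m\delta$, which one checks satisfies $q(H-m\delta)=H^2+m^2q(\delta)=2Nm^2-2Nm^2=0$.

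The crux is to upgrade this lattice-theoretic matching to an honest isomorphism $M^H_S(v)\cong M_{\hat S}(\hat v)$ of moduli spaces. For this I would verify that every $H$-stable sheaf with Mukai vector $v$, in particular every ideal sheaf $I_Z$, is $\Phi$-WIT concentrated in a single degree, and that the transform carries $H$-Gieseker stability to $\hat H$-Gieseker stability for a suitable polarization $\hat H$ on $\hat S$; since $v$ is primitive and $\Pic(S)$ has rank one, $v$ lies in the interior of a stability chamber, so stable objects go to stable objects and the two smooth projective moduli spaces are identified. I expect this preservation-of-stability step to be the main obstacle. A second, related subtlety is that for $m\ge 2$ the pairing $\langle\,\cdot\,,w\rangle$ takes values in $m\Z$, so no algebraic class pairs to $1$ with $w$ and $M_S(w)$ fails to be a fine moduli space: $\hat S$ is naturally a twisted K3 surface carrying a nontrivial Brauer class, and the equivalence above is a twisted Fourier--Mukai transform. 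This does not affect the conclusion, since the support map of the resulting (twisted) Beauville--Mukai system is still a Lagrangian fibration onto $\Pbb^{N+1}$, and it is precisely the origin of the word ``twisted'' in our main results.
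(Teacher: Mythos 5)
Your construction is sound as far as it goes, and it is a genuinely different route from the one this paper takes: the paper proves Theorem \ref{MS} as Corollary \ref{cor}, by specializing Theorem \ref{asymp} to $h=1$, $\Delta=N$, $k=m\geqslant 2>\frac{3}{2}$ --- that is, by a Bridgeland wall-crossing argument showing that $\Mov(\Hilb{N+1}(S))$ has no interior walls, after which the equivalences of \cite{HT19} convert ``no other birational model'' into ``is a Lagrangian fibration.'' Your lattice computations are all correct: $\wbf=(m,H,mN)$ is primitive, isotropic and orthogonal to $\vbf=(1,0,-N)$; the partner $\hat S=M_H(\wbf)$ has Picard rank one and degree $2N$ (this matches the lemma in Section 5 of the paper, with $h=1$); and the transform of $\vbf$ is a rank-zero class $(0,\hat H,\hat s)$ whose moduli space is a twisted Beauville-Mukai system. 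Indeed, Section 5 of the paper carries out exactly this construction --- but there the isomorphism with the Hilbert scheme is \emph{deduced from} the Main Theorem, not proved independently.

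The genuine gap is at the step you yourself call the crux, and the justification you offer for it is invalid. The claim ``since $\vbf$ is primitive and $\Pic(S)$ has rank one, $\vbf$ lies in the interior of a stability chamber, so stable objects go to stable objects'' is a non sequitur: chambers in the ample cone govern how Gieseker stability depends on the choice of polarization (and are vacuous here, since the ample cone is a single ray); they say nothing about whether a derived equivalence carries stable sheaves to stable twisted sheaves. In general the Fourier-Mukai transform of an ideal sheaf $I_Z$ need not even be a sheaf (the WIT property can fail), and when it is a sheaf it need not be stable. Establishing precisely this is the hard technical content of \cite{Mar06,Saw07}; the closing remark of Section 5 of the paper emphasizes that this step required Yoshioka's input and used $h=1$ in an essential way, and that the authors could not extend it to $h\geqslant 2$. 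Alternatively, you could finish with Bridgeland stability: the equivalence automatically gives $M_\sigma(\vbf)\cong M_{\Phi_*\sigma}(\Phi_*\vbf)$, but then you must show that $\Phi_*\sigma$ and the Gieseker chamber for $(0,\hat H,\hat s)$ are separated by no flopping wall --- and ruling out such walls is exactly what the wall-crossing analysis of Theorem \ref{asymp} accomplishes. Without one of these two inputs, your argument only shows that $\Hilb{N+1}(S)$ is \emph{birational} to the twisted Beauville-Mukai system, which is the statement you started from.
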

In these cases one can find a K3 surface $S'$, typically not isomorphic to $S$, that is derived equivalent to $S$. Using this derived equivalence one can show that ideal sheaves of length $N+1$ zero-dimensional subschemes of $S$ correspond to twisted pure dimension one sheaves on $S'$. The moduli space of such sheaves on $S'$ is a \emph{twisted Beauville-Mukai system} and has the structure of a Lagrangian fibration. In particular, $\Hilb{N+1}(S)$ is isomorphic, not just birational, to this twisted Beauville-Mukai system.

Our main theorem provides a generalization of Theorem \ref{MS}.
\begin{main}[=Theorem 3.2]
Let $S$ be a K3 surface with $\Pic(S)=\Z\cdot H$ and $H^2=2d$. Suppose that $N\in\Z^+$ and $d/N$ is the square of a sufficiently large rational number. Then $\Hilb{N+1}(S)$ is a Lagrangian fibration.
\end{main}
The meaning of `sufficiently large' will become evident during the proof of the Main Theorem; in particular, it includes the cases of Theorem~1.1, which correspond to $d/N=m^2$ being the square of an integer $m\geq 2$. For the proof, it suffices to show that the movable cone of $\Hilb{N+1}(S)$ has only one chamber, or equivalently, that there are no interior walls (see Section~2 for details). We use the observation that an interior wall in the movable cone would correspond to a semi-circle in the open half plane parametrizing Bridgeland stability conditions on $S$, and the semi-circle will intersect a chosen line. When $d/N$ is sufficiently large, we show that such an intersection cannot happen. 

In addition, the proof of the Main Theorem can be used in a different way. Let $N$ be given. We know from the Main Theorem that when $d$ is large there is no wall in the interior of the movable cone of $\Hilb{N+1}(S)$. By following the argument of the proof, one can actually find the minimal degree $d_0$ where the cone starts to have only one chamber, and one can find all of the walls in the movable cone of $\Hilb{N+1}(S)$ for smaller values of $d$. This provides an efficient algorithm for finding all birational models of $\Hilb{N+1}(S)$ as $d$ varies.

The paper is organized as follows. We review some preliminary material and the set up for the Main Theorem in Section~2. We prove the Main Theorem and recover Theorem~1.1 as a corollary in Section~3. We illustrate the application of the proof described in the previous paragraph in Section~4. Finally, we provide a link of $\Hilb{N+1}(S)$ to the twisted Beauville-Mukai system in Section~5.

\subsection*{Acknowledgement} The authors would like to thank Nicolas Addington and Emanuele Macr\`i for helpful discussions. The second author gratefully acknowledges support from the NSF, grants DMS-1555206 and DMS-2152130.

\section{Setting}
Let $(S,H)$ be a polarized K3 surface such that $\Pic(S)=\Z\cdot H$. Let $H^2=2d=2\Delta k^2$ where $\Delta$ and $k\in \Z^+$. We do not assume that $\Delta$ is square-free. For $h\in \Z^+$ with $\mathrm{gcd}(k,h)=1$, consider the Hilbert scheme of $N+1=\Delta h^2+1$ points $\Hilb{\Delta h^2+1}(S)$. Recall that the N\'eron-Severi group $\NS(\Hilb{\Delta h^2+1}(S))$ has a basis given by
\begin{align*}
    \tilde{H}=\theta(0,-1,0) \qquad B=\theta(-1,0,-\Delta h^2),
\end{align*}
where $\theta$ is the Mukai homomorphism (for example, see~\cite{Yos01}). By~\cite[Proposition~13.1(a)]{BM14b},
\begin{align*}
    \Mov(\Hilb{\Delta h^2+1}(S))=\left\langle \tilde{H}, \tilde{H}-\frac{k}{h}B\right\rangle.
\end{align*}
Since $\Delta^2h^2k^2$ is a perfect square, the same proposition shows that $\Hilb{\Delta h^2+1}(S)$ admits a rational Lagrangian fibration induced by the divisor $h\tilde{H}-kB$. By~\cite{HT19}, the following statements are equivalent:
\begin{itemize}
    \item $\Hilb{\Delta h^2+1}(S)$ is a Lagrangian fibration;
    \item the movable cone of $\Hilb{\Delta h^2+1}(S)$ has only one chamber;
    \item there is no other compact hyperk\"ahler manifold birational to $\Hilb{\Delta h^2+1}(S)$.
\end{itemize}

We use the notation and conventions of~\cite[Section 2]{QS22b} for Bridgeland stability. For $x\in \R$ and $y\in \R_{>0}$, we denote by $\sigma_{x,y}$ the pair $(\Coh^{xH}(S), Z_{xH,yH})$, where $\Coh^{xH}(S)$ is obtained by tilting the category of coherent sheaves $\Coh(S)$ and $Z_{xH,yH}$ is a group homomorphism from the numerical Grothendieck group $K_{\mathrm{num}}(S)$:
\begin{align*}
    Z_{xH,yH}:K_{\mathrm{num}}(S)&\to \C\\
    F&\mapsto (e^{xH+\sqrt{-1}yH},\vbf(F)).
\end{align*}
Here $\vbf(F)$ is the Mukai vector of $F$ and the pairing on the right hand side is the Mukai pairing. If $x$ and $y$ satisfy a mild condition~\cite[Lemma 6.2]{Bri08}, $\sigma_{x,y}$ is a Bridgeland stability condition. We will call the subset of $\{(x,y)\in\R^2\mid y>0\}$ consisting of such pairs of $(x,y)$ \emph{the $xy$-plane}. We refer the reader to \cite{BM14b,QS22b} for the wall and chamber structure on the $xy$-plane.

\section{Proof of the Main Theorem}
A wall in $\Mov(\Hilb{\Delta h^2+1}(S))$ corresponds to a ray $\tilde{H}-\Gamma B$, where $0\leqslant \Gamma \leqslant \frac{k}{h}$. By the argument of~\cite[Theorem 10.8]{BM14a} and~\cite[Proposition 4.5]{QS22a}, any \emph{interior wall}, that is a wall whose $\Gamma$ satisfies $0<\Gamma<\frac{k}{h}$, will correspond to a flopping wall for $\vbf=(1,0,-\Delta h^2)$ in the $xy$-plane of Bridgeland stability conditions. Our first lemma narrows the range of $\Gamma$.

\begin{lem}
Let $W$ be a wall in the interior of   $\Mov(\Hilb{\Delta h^2+1}(S))$. Then its corresponding $\Gamma$ satisfies $\frac{2k^2}{h^2+k^2+1}\leqslant \Gamma<\frac{k}{h}$.
\end{lem}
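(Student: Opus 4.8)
The plan is to pass to the $xy$-plane and bound the center of the semicircle attached to $W$. As recalled just before the statement, an interior wall produces a flopping wall for $\vbf=(1,0,-\Delta h^2)$, which in the $xy$-plane is a semicircle meeting the real axis in a center $(x_0,0)$ and having radius $\sqrt{x_0^2-h^2/k^2}$. First I would make the dictionary between $W$ and this semicircle explicit: computing the Bayer--Macr\`i nef class $\ell_\sigma$ along the wall and writing it in the basis $\tilde{H},B$, one finds $\ell_\sigma$ proportional to $\tilde{H}-\Gamma B$ with $\Gamma=-1/x_0$. In particular $x_0<0$; requiring the radius to be real forces $x_0<-h/k$, and this already delivers the upper bound $\Gamma<\tfrac{k}{h}$ for free. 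The remaining content, the lower bound, is under $\Gamma=-1/x_0$ equivalent to the statement that the center cannot lie too far to the left, namely $-x_0\le \dfrac{h^2+k^2+1}{2k^2}$.

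Next I would pin down the class cutting out the wall. By the Bayer--Macr\`i classification a flopping wall is governed by a class $\sbf=(r',c'H,s')\in\Halg(S)$ generating with $\vbf$ a rank-two hyperbolic sublattice, and I would first reduce to the case where $\sbf$ is spherical: a direct computation of $x_0$ shows that replacing $\sbf^2=-2$ by $\sbf^2=2\ell\ge -2$ only moves the center toward the origin, i.e. increases $\Gamma$, so the spherical class governs the extreme case. Taking $\sbf$ spherical, $\Delta k^2(c')^2-r's'=-1$, I would then record the two numerical conditions that make it a genuine interior wall: positivity $\langle\sbf,\vbf\rangle>0$ and the subobject bound $\langle\sbf,\vbf\rangle\le\tfrac12\vbf^2=\Delta h^2$. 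Writing $a=|c'|$, $b=|r'|$, and taking the representative with $r'>0$, these read
\[
\text{(A)}\quad k^2a^2\le h^2b^2-1,\qquad\text{(B)}\quad h^2b(b-1)\le k^2a^2+\tfrac1\Delta .
\]

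The estimate then becomes elementary. Using the spherical relation one computes
\[
-x_0=\frac{\Delta k^2a^2+\Delta h^2b^2+1}{2\Delta k^2\,ab},
\]
so the target $-x_0\le\tfrac{h^2+k^2+1}{2k^2}$, after clearing denominators and using $1\le\Delta ab$, is equivalent to $(a-b)(k^2a-h^2b)\le ab-\tfrac1\Delta$. I would prove this by a short case split on the sign of $h-k$. If $h>k$, then (A) gives $a/b<h/k<h^2/k^2$, while (B) forces $a\ge b$ (for $b\ge2$ it yields $a^2>b^2-b$, which is incompatible with $a\le b-1$), so $(a-b)(k^2a-h^2b)\le0\le ab-\tfrac1\Delta$. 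If $h<k$, then $a\le b-1$, and (B) combined with $a\le b-1$ gives $k^2a\ge h^2b-\tfrac1{\Delta(b-1)}$, whence $(a-b)(k^2a-h^2b)\le\tfrac1\Delta\le ab-\tfrac1\Delta$ (using $ab\ge2$). The extremal wall, where the bound is essentially attained, is the one cut out by $\sbf=(1,-H,\Delta k^2+1)=\vbf(\Ocal_S(-H))$.

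The step I expect to be the main obstacle is the second paragraph: justifying that the wall may be represented by a spherical class and extracting exactly (A) and (B) from the Bayer--Macr\`i theory. In particular one must keep careful track of which spherical representative (equivalently, the sign of $r'$, i.e. whether one uses the spherical sub- or quotient class) governs a given wall; the opposite sign leads to the companion inequalities $k^2a^2\ge h^2b^2$ and $k^2a^2\le h^2b(b+1)-\tfrac1\Delta$, which are handled by the symmetric estimate. Attention is also needed at the boundary case $\langle\sbf,\vbf\rangle=\tfrac12\vbf^2$, where the $\tfrac1\Delta$ slack in (B) is exactly what makes the final inequality go through. Once the reduction and the two inequalities are in place, the concluding estimate is the one-line case check above.
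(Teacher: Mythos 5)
Your route is genuinely different from the paper's. The paper proves this lemma without classifying wall-inducing classes at all: assuming $\Gamma<\frac{2k^2}{h^2+k^2+1}$, the semicircle crosses the vertical line $x=-1$ at a height $y_0$ with $y_0^2>\frac{1}{k^2}$, so $\sigma_{-1,y_0}$ is a genuine stability condition; the wall then forces a decomposition $(1,0,-\Delta h^2)=(a,b,c)+(1-a,-b,-\Delta h^2-c)$ into classes of objects of the heart $\Coh^{-1}(S)$, and since both summands must have central charge of the same phase as $Z(\vbf)$, whose imaginary part is strictly positive, one gets $0<a+b<1$, contradicting integrality. Your argument instead runs the Bayer--Macr\`i classification plus an elementary estimate on the center $x_0$, which is much closer in spirit to the paper's proof of its Theorem~3.2 than to its proof of this lemma. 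The parts you carried out are correct: the dictionary $\Gamma=-1/x_0$, the center formula $-x_0=\frac{\Delta k^2a^2+\Delta h^2b^2+1}{2\Delta k^2 ab}$ for a spherical class, the inequalities (A) and (B), their companions for the other sign of $r'$, and the concluding case analysis (two trivial touch-ups: your dichotomy on the sign of $h-k$ silently omits $h=k=1$, though your $h<k$ argument covers it verbatim, and you should record $r',c'\neq 0$ so that $a,b\geq 1$).

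The genuine gap is exactly where you expected it, and it is real: the reduction to spherical $\sbf$ is invalid as stated. By \cite[Theorem~5.7]{BM14b} a flopping wall is induced either by a spherical class with $0<(\sbf,\vbf)\leq\frac{1}{2}\vbf^2$ or by a decomposition $\vbf=\abf+\bbf$ into positive classes, and in the second case no spherical class with the required pairing need exist in the lattice of the wall. Your monotonicity observation --- raising $\sbf^2$ with $(a,b)$ fixed moves the center toward the origin --- is true, but it compares different walls and, crucially, the numerical constraints move along with $\sbf^2$: for a positive class with $\abf^2=2\ell\geq 0$, positivity only yields $k^2a^2<h^2b^2+\ell/\Delta$, which is weaker than (A), so the inequality you proved under (A) and (B) cannot simply be quoted for the spherical "extreme case". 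The positive-class case is salvageable: hyperbolicity gives $0\leq\abf^2<\frac{1}{4}\vbf^2$, i.e.\ $\ell<\frac{\Delta h^2}{4}$, the target becomes $(a-b)(k^2a-h^2b)\leq ab+\ell/\Delta$, and a case analysis parallel to yours closes it, together with the easy rank-zero case $\abf=(0,c'H,s')$, where $0<-s'\leq \Delta h^2$ gives $-x_0\leq\frac{h^2}{2k^2}$ directly. But as written, positive-class walls are not covered by your proof, so the argument is incomplete without this extra step.
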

\begin{proof}
The wall $W$ corresponding to $\Gamma$ is given in the $xy$-plane by $\left(x+\frac{1}{\Gamma}\right)^2+y^2=\frac{1}{\Gamma^2}-\frac{h^2}{k^2}$. This semicircle intersects the vertical line $x=-1$ at $y_0^2=\frac{2}{\Gamma}-\frac{h^2+k^2}{k^2}$. Suppose that $\Gamma$ is less than $\frac{2k^2}{h^2+k^2+1}$. Then $y_0^2>\frac{1}{k^2}$. Recall that $\sigma_{-1,y}$ is a stability condition if $y^2>\frac{1}{k^2}$ by~\cite[Lemma~6.2]{Bri08} or~\cite[Lemma~4.2]{QS22b}. Thus $\sigma_{-1,y_0}$ is a stability condition and $W$ induces at $\sigma_{-1,y_0}$ a decomposition of Mukai vectors:
\begin{align*}
    (1,0,-\Delta h^2)=(a,b,c)+(1-a,-b,-\Delta h^2-c)
\end{align*}
where $a$, $b$, $c\in \Z$ are integers and the two Mukai vectors on the right are those of objects in $\Coh^{-1}(S)$. As a result, $0\leqslant b+a\leqslant 1$. We have
\begin{align*}
    &Z_{-1,y_0}(1,0,-\Delta h^2)=\Delta (k^2y_0^2-k^2+h^2)+\sqrt{-1}(2\Delta k^2y_0);\\
    &Z_{-1,y_0}(a,b,c)=(-2\Delta k^2b-c+a\Delta k^2(y_0^2-1))+\sqrt{-1}(2\Delta k^2y_0(b+a)).
\end{align*}
Since the above two complex numbers have the same argument, $b+a>0$. A similar argument applied to $(1-a,-b,-\Delta h^2-c)$ shows that $b+a<1$. Thus $0<b+a<1$, which contradicts the assumption that $a$, $b\in \Z$.
\end{proof}

\begin{thm}\label{asymp}
Let $\Delta$ and $h\in \Z^+$ be fixed. For $k\in\Z^+$ sufficiently large with $\mathrm{gcd}(h,k)=1$, the movable cone of $\Hilb{\Delta h^2+1}(S)$ has only one chamber. In particular:
\begin{itemize}
    \item the divisor $h\tilde{H}-kB$ induces a Lagrangian fibration on $\Hilb{\Delta h^2+1}(S)$;
    \item there is no other hyperk\"ahler manifold birational to $\Hilb{\Delta h^2+1}(S)$.
\end{itemize}
\end{thm}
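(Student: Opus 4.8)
The plan is to show that the lower bound on $\Gamma$ established in Lemma~3.1 forces the corresponding semicircle in the $xy$-plane to shrink to a point, and hence to disappear entirely, as $k\to\infty$. By the reduction preceding Lemma~3.1, it suffices to prove that for $k$ sufficiently large there are no interior walls in $\Mov(\Hilb{\Delta h^2+1}(S))$; equivalently, the range of admissible $\Gamma$ values for an interior wall becomes empty. Lemma~3.1 tells us that any interior wall has $\Gamma$ confined to the half-open interval $\left[\frac{2k^2}{h^2+k^2+1},\frac{k}{h}\right)$, so I would first examine this interval carefully as a function of $k$ with $\Delta,h$ fixed.

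The key computation is to compare the two endpoints. As $k\to\infty$ the left endpoint $\frac{2k^2}{h^2+k^2+1}$ tends to $2$ from below, while the right endpoint $\frac{k}{h}$ grows without bound, so naively the interval does not close up — this means a bound on $\Gamma$ alone is \emph{not} enough, and I expect the main obstacle to be extracting a genuine contradiction rather than merely a nonempty interval. The resolution, I anticipate, is to feed the lower bound $\Gamma\geqslant\frac{2k^2}{h^2+k^2+1}$ back into the geometry of the wall: the semicircle has center $\left(-\frac{1}{\Gamma},0\right)$ and radius-squared $\frac{1}{\Gamma^2}-\frac{h^2}{k^2}$, and a flopping wall must actually arise from an honest decomposition $(1,0,-\Delta h^2)=(a,b,c)+(1-a,-b,-\Delta h^2-c)$ with integral $a,b,c$ and bounded discriminant (spherical or isotropic classes). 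I would impose the numerical constraints that such a potential wall-inducing class must satisfy — positivity of the discriminant of the sub-object and the Mukai-vector inequalities coming from the wall-and-chamber analysis of~\cite{BM14b,QS22b} — and show that when $\Gamma$ is forced as large as $\frac{2k^2}{h^2+k^2+1}$, these become incompatible with the integrality of $(a,b,c)$ once $k$ exceeds an explicit bound depending on $\Delta$ and $h$.

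Concretely, I would parametrize candidate walls by the finitely many numerical types of destabilizing sub-objects and, for each type, solve for the $\Gamma$ (equivalently the semicircle) it would produce; the radius and center are rational functions of $a,b,c,\Delta,h,k$. Substituting the lower bound from Lemma~3.1 yields, for each fixed sub-object type, an inequality in $k$ that fails for all large $k$. Since the discriminant bound limits the number of relevant types to a finite set independent of $k$, taking $k$ past the maximum of the finitely many thresholds eliminates every interior wall simultaneously. Having shown no interior wall exists, I conclude from the equivalences in Section~2 (via~\cite{HT19}) that the movable cone has a single chamber, whence $\Hilb{\Delta h^2+1}(S)$ is itself the Lagrangian fibration induced by $h\tilde{H}-kB$ and admits no other birational hyperk\"ahler model.

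The step I expect to be hardest is the uniform control over destabilizing classes: making precise that the discriminant of any wall-inducing sub-object is bounded independently of $k$, so that finitely many numerical types suffice. This is where the structure theory of walls for the Mukai vector $\vbf=(1,0,-\Delta h^2)$ — in particular the fact that flopping walls correspond to rank-bounded spherical or isotropic classes with controlled invariants — must be invoked, and it is the place where the precise meaning of ``sufficiently large'' in the theorem statement gets pinned down to an explicit function of $\Delta$ and $h$.
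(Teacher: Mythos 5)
Your high-level plan --- reduce to the non-existence of interior walls, invoke the Bayer--Macr\`i constraints on a destabilizing class $\wbf=(a,b,c)$, and let integrality produce a contradiction for $k$ large --- is the same as the paper's, and your observation that Lemma~3.1 by itself cannot suffice (the interval of admissible $\Gamma$ does not close up) is correct. But the pivotal step of your outline is unjustified, and you have misplaced the difficulty. Bounding the ``discriminant'' is not the hard part: by \cite[Theorem~5.7 and Section~9]{BM14b}, either $\wbf^2=-2$ or $0\leqslant\wbf^2<\frac{\Delta h^2}{2}$, together with $|(\wbf,\vbf)|\leqslant\Delta h^2$, and these bounds are automatically independent of $k$ because $\vbf^2=2\Delta h^2$ is. (Two side remarks: ``spherical or isotropic'' is too restrictive, since positive classes with $0<\wbf^2<\frac{\Delta h^2}{2}$ also occur; and there is no off-the-shelf fact that destabilizing classes are ``rank-bounded'' --- bounding the rank $a$ is precisely the content of the proof, not an input to it.) The gap is your claim that these bounds ``limit the number of relevant types to a finite set independent of $k$,'' each contributing one threshold in $k$. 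If a ``type'' means the pair $\left(\wbf^2,(\wbf,\vbf)\right)$, finiteness is trivial but yields no threshold: writing $j=(\wbf,\vbf)$, the wall position is $\Gamma=\frac{-2\Delta k^2b}{2\Delta h^2a-j}$, which depends on the class $(a,b)$ and not only on its invariants, so ``the $\Gamma$ it would produce'' is not a function of the type. If instead a ``type'' means the class $(a,b,c)$ itself, then finiteness independent of $k$ is exactly what must be proved: for fixed $\wbf^2$ and $j$, the candidate classes satisfy the conic equation $\Delta k^2b^2-\Delta h^2a^2+ja=\frac{\wbf^2}{2}$, and nothing in your outline excludes, for every large $k$, a solution with $|a|$ and $|b|$ enormous but $ha+kb$ small, which would still produce an interior wall.

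What fills this hole in the paper is a case analysis on the rank $a$, none of whose ingredients appear in your sketch. The heart condition near $x=-\frac{1}{\Gamma}$ (both summands of the decomposition lie in $\Coh^{-1/\Gamma}(S)$, so $b+\frac{a}{\Gamma}\geqslant0$ and $-b+\frac{1-a}{\Gamma}\geqslant0$), combined with the interior-wall inequality $\Gamma<\frac{k}{h}$, traps the integer $i=ha+kb$ in an interval of length about $h$; then the identity $(j-2\Delta hi)a=\frac{\wbf^2}{2}-\Delta i^2$, together with $j\neq2\Delta hi$ (a consequence of the hyperbolicity of the lattice spanned by $\vbf$ and $\wbf$), bounds $|a|$ by a function of $\Delta$ and $h$ alone; finally, since $\Gamma>0$ forces $b\neq0$, this converts into $k\leqslant k|b|=|ha-i|<\Delta h(h-1)^2+\frac{3h}{2}$, which is the explicit meaning of ``sufficiently large.'' Some mechanism of this kind, turning the invariant bounds into bounds on $a$ and on $kb$, is indispensable; an alternative to the paper's case analysis would be a difference-of-squares factorization in the spirit of Remark~4.2, using $(2\Delta h^2a-j)^2-(2\Delta hkb)^2=j^2-\vbf^2\wbf^2>0$ and the divisor structure of the right-hand side, but your proposal contains neither, so the per-type thresholds you appeal to have not been shown to exist and the argument does not close.
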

\begin{rem}
We shall see in the proof that $k\geqslant\Delta h(h-1)^2+\frac{3h}{2}$ is always `sufficiently large', though the theorem is often true for smaller values of $k$ too.
\end{rem}
\begin{proof}
It suffices to show that for large $k$ there are no interior walls in the movable cone. Equivalently, it is enough to show that there are no flopping walls for $\vbf$ in the $xy$-plane. Suppose otherwise, i.e., that there is a flopping wall $W$ corresponding to the ray $\tilde{H}-\Gamma B$ where $\frac{2k^2}{h^2+k^2+1}\leqslant \Gamma< \frac{k}{h}$. Now $W$ is given in the $xy$-plane by $\left(x+\frac{1}{\Gamma}\right)^2+y^2=\frac{1}{\Gamma^2}-\frac{h^2}{k^2}$. Near the intersection of $W$ and $x=-\frac{1}{\Gamma}$, $W$ induces a decomposition of Mukai vectors:
\begin{align*}
    (1,0,-\Delta h^2)=(a,b,c)+(1-a,-b,-\Delta h^2-c)
\end{align*}
where $a$, $b$, $c\in \Z$ are integers. Moreover, by the definition of $\Coh^{-\frac{1}{\Gamma}}(S)$, $b+\frac{1}{\Gamma}a\geqslant 0$ and $(-b)+\frac{1}{\Gamma}(1-a)\geqslant0$.

Write $\wbf:=(a,b,c)$. We can assume that $\wbf$ is primitive, for otherwise we could replace $\wbf$ by its primitive scalar multiple and work instead with the resulting decomposition of Mukai vectors. By \cite[Theorem 5.7]{BM14b} and \cite[Section 9]{BM14b}, we have either that $\wbf^2=-2$ and $-\Delta h^2\leqslant(\wbf,\vbf)\leqslant \Delta h^2$ or that both $\wbf$ and $\vbf-\wbf$ are positive classes. In the second case, using the fact that $\vbf$ and $\wbf$ generate a hyperbolic lattice, we can assume that $0\leqslant \wbf^2<\frac{\Delta h^2}{2}$ and $2\wbf^2+1\leqslant (\wbf,\vbf)\leqslant \Delta h^2$. 

We have
\begin{align*}
    (\wbf,\vbf)=\Delta h^2a-c=:j,\\
    \frac{\wbf^2}{2}=\Delta k^2b^2-ac.
\end{align*}
Note that in both cases we have $-\Delta h^2\leqslant j\leqslant \Delta h^2$. Moreover, since the wall $\wbf$ corresponds to $\Gamma$, we have $\left(\wbf,\left(1,\frac{-1}{\Gamma},\Delta h^2\right)\right)=0$, and thus 
\begin{align}\label{Gamma}
    \Gamma=\frac{-2\Delta k^2b}{2\Delta h^2 a-j}.
\end{align}
We notice that since $\Gamma>0$, we must have $b\neq0$ and $ab\leqslant0$. We proceed by cases.

{\bf Case 1 : $a=0$.} Then $\Delta k^2b^2=\frac{\wbf^2}{2}<\frac{\Delta h^2}{4}$, which implies that $k|b|< \frac{h}{2}$. This cannot happen if $k\geqslant\frac{h}{2}$.

{\bf Case 2 : $a=1$ and $j\geqslant 0$.} Then $c=\Delta h^2-j$ and $\Delta k^2b^2=ac+\frac{\wbf^2}{2}=\Delta h^2-j+\frac{\wbf^2}{2}$. Combining the bounds for $j$ and $\wbf^2$, we obtain $-\frac{1}{\Delta}\leqslant k^2b^2< \frac{5h^2}{4}$. As a result $k(-b)< \frac{\sqrt{5}h}{2}$. This cannot happen if $k\geqslant\frac{\sqrt{5}h}{2}$.

{\bf Case 3 : $a=-1$ and $j\geqslant0$.} Then $c=-\Delta h^2-j$ and  $\Delta k^2b^2=ac+\frac{\wbf^2}{2}=\Delta h^2+j+\frac{\wbf^2}{2}$. Combining the bounds for $j$ and $\wbf^2$, we obtain $h^2-\frac{1}{\Delta}\leqslant k^2b^2< \frac{9h^2}{4}$. As a result $kb< \frac{3h}{2}$. This cannot happen if $k\geqslant\frac{3h}{2}$.\ 

{\bf Case 4 : $a=1$ and $j<0$.} Then one can replace $\wbf$ with $-\wbf$, and apply the argument of case 3.

{\bf Case 5 : $a=-1$ and $j>0$.} Similarly, one can replace $\wbf$ with $-\wbf$, and apply the argument of case 2.

{\bf Case 6 : $a<-1$.} Using $\Gamma< \frac{k}{h}$ and (\ref{Gamma}), we obtain $2\Delta h(ha+kb)< j\leqslant \Delta h^2$. On the other hand, we have $ha+kb=k\left(\frac{h}{k}a+b\right)> k\left(\frac{1}{\Gamma}a+b\right)\geqslant 0$. Thus $$1\leqslant ha+kb< \frac{h}{2}.$$
This already implies that the present case can be eliminated if $h=1$ or $2$. Let $i=ha+kb$. Since $\Delta k^2b^2=ac+\frac{\wbf^2}{2}$, we obtain that 
\begin{align*}
    -\frac{\Delta h^2}{4}-1<(j-2\Delta hi)a=\frac{\wbf^2}{2}-\Delta i^2<\frac{\Delta h^2}{4}-\Delta.
\end{align*}
We claim that $j\neq 2\Delta hi$. Otherwise, we would have $\wbf^2=2\Delta i^2$, $(\wbf,\vbf)=j=2\Delta hi$, and $\vbf^2=2\Delta h^2$, which contradicts our assumption that $\wbf$ and $\vbf$ generate a hyperbolic lattice. As a result, we obtain $|a|< \frac{\Delta h^2}{4}+1$. This implies that
\begin{align*}
k|b|=kb=h(-a)+i< \frac{\Delta h^3}{4}+h+\frac{h}{2}=\frac{\Delta h^3}{4}+\frac{3h}{2}.
\end{align*}
This cannot happen if $k\geqslant\frac{\Delta h^3}{4}+\frac{3h}{2}$.

{\bf Case 7 : $a>1$.} Using $\Gamma< \frac{k}{h}$ and (\ref{Gamma}), we obtain $2\Delta h(ha+kb)> j$. On the other hand, $(-b)+\frac{h}{k}(1-a)> (-b)+\frac{1}{\Gamma}(1-a)\geqslant 0$ implies that $ha+kb< h$. Thus 
\begin{align*}
    -\frac{h}{2}\leqslant \frac{j}{2\Delta h}< ha+kb\leq h-1.
\end{align*}
Let $i=ha+kb$. The computation in the previous case leads to
\begin{align*}
    -\Delta (h-1)^2-1\leqslant(j-2\Delta hi)a=\frac{\wbf^2}{2}-\Delta i^2<\frac{\Delta h^2}{4}.
\end{align*}
Since $j\neq 2\Delta hi$, we obtain $|a|\leqslant \max{\left(\Delta (h-1)^2+1,\frac{\Delta h^2}{4}\right)}$.  If $h=1$, then $-\frac{1}{2}\leqslant\frac{j}{2\Delta }< a+kb\leqslant0$, which would lead to a contradiction unless $j<0$. When $j<0$, we must have $i=a+kb=0$ and $\wbf^2=-2$. As a result, we obtain $ja=(j-2\Delta hi)a=\frac{\wbf^2}{2}-\Delta i^2=-1$, which contradicts our assumption $a>1$. If $h\geq 2$, then $\Delta (h-1)^2+1>\frac{\Delta h^2}{4}$ for any $\Delta\in\Z^+$. Hence $|a|\leq \Delta (h-1)^2+1$. Then
\begin{align*}
k|b|=k(-b)=ha-i< \Delta h(h-1)^2+h+\frac{h}{2}=\Delta h(h-1)^2+\frac{3h}{2}.
\end{align*}
This cannot happen if $k\geqslant\Delta h(h-1)^2+\frac{3h}{2}$.

Since $\Delta h(h-1)^2+\frac{3h}{2}\geqslant\frac{\Delta h^3}{4}+\frac{3h}{2}$ when $h\geqslant2$, we conclude that if $k\geqslant\Delta h(h-1)^2+\frac{3h}{2}$ then there is no flopping wall in the movable cone of $\Hilb{\Delta h^2+1}(S)$. This completes the proof.
\end{proof}

We can now recover Theorem \ref{MS} as a corollary.
\begin{cor}\cite{Mar06,Saw07,BM14a}\label{cor}
Let $S$ be a K3 surface with $\Pic(S)=\Z\cdot H$ and $H^2=2\Delta k^2$, where $\Delta$, $k\in \Z^+$ and $k\geqslant2$. As above, we do not assume that $\Delta$ is square-free. Then the movable cone of $\Hilb{\Delta+1}(S)$ has one chamber and $\Hilb{\Delta+1}(S)$ is a Lagrangian fibration.
\end{cor}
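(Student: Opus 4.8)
The plan is to recognize this corollary as the $h=1$ specialization of Theorem~\ref{asymp}. First I would set $h=1$, so that $\Hilb{\Delta h^2+1}(S)$ becomes exactly $\Hilb{\Delta+1}(S)$, and observe that the coprimality hypothesis $\gcd(h,k)=1$ required in Theorem~\ref{asymp} holds automatically, since $\gcd(1,k)=1$ for every $k$. Thus no genuine new argument is needed; the corollary should follow by feeding $h=1$ into the theorem and tracking how its threshold simplifies.

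Next I would substitute $h=1$ into the explicit threshold $k\geqslant \Delta h(h-1)^2+\frac{3h}{2}$ supplied by the Remark following Theorem~\ref{asymp}. The term $\Delta h(h-1)^2$ vanishes and the bound collapses to $k\geqslant\frac{3}{2}$; since $k$ is a positive integer this is precisely the hypothesis $k\geqslant 2$. Hence Theorem~\ref{asymp} applies verbatim and shows that the movable cone of $\Hilb{\Delta+1}(S)$ has a single chamber, whence $\Hilb{\Delta+1}(S)$ is a Lagrangian fibration via the equivalences of~\cite{HT19} recorded in Section~2.

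The only point I expect to warrant a second look, and hence the closest thing to an obstacle here, is confirming that every case in the proof of Theorem~\ref{asymp} truly closes under $h=1$ and $k\geqslant 2$, rather than merely the aggregate threshold being met. I anticipate this to be routine: Cases~1--3 each impose a strict inequality of the shape $k\,|b|<\frac{ch}{2}$ with $c\in\{1,\sqrt{5},3\}$, all of which fail once $k\geqslant 2$ and $h=1$; Cases~4 and 5 reduce to these by the substitution $\wbf\mapsto-\wbf$; Case~6 is eliminated outright for $h=1$ because the chain $1\leqslant ha+kb<\frac{h}{2}=\frac{1}{2}$ is vacuous; and Case~7 is handled separately for $h=1$ within the proof, where $-\frac{1}{2}\leqslant\frac{j}{2\Delta}<a+kb\leqslant 0$ forces $a+kb=0$ and $\wbf^2=-2$, contradicting $a>1$. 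With no case surviving, there is no flopping wall in the movable cone, and the corollary follows.
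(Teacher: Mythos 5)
Your proposal is correct and matches the paper's own proof, which likewise just specializes Theorem~\ref{asymp} to $h=1$ and observes that $k\geqslant 2>\frac{3}{2}=\Delta h(h-1)^2+\frac{3h}{2}$. Your additional case-by-case verification that each case of the theorem's proof closes under $h=1$, $k\geqslant 2$ is accurate but redundant, since the theorem's stated threshold already encapsulates it.
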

\begin{proof}
In this case we have $h=1$. The statement follows since
$$ k\geqslant2>\frac{3}{2}= \Delta h(h-1)^2+\frac{3h}{2}.$$
\end{proof}
\begin{rem}
This is a (rare) case where our bound on $k$ is sharp. If $k=1$ then $\Hilb{\Delta+1}(S)$ is only birational to a Lagrangian fibration.
\end{rem}

\section{Applications of the Proof}

In specific cases, the general bound $k\geqslant\Delta h(h-1)^2+\frac{3h}{2}$ from the end of proof of Theorem~\ref{asymp} is usually too restrictive. Knowing the values of $\Delta$ and $h$, one can often obtain a much better bound by going through the argument of the proof of Theorem \ref{asymp} with the values of $\Delta$ and $h$ substituted in. Moreover, we obtain through this process all of the walls for small values of $k$. We demonstrate this in the following examples.

\subsection{The case $\Delta=h=1$}
The next proposition is well-known: the first part is the original example of a Mukai flop~\cite[Example~0.6]{Muk84}, and the second part follows from Corollary~\ref{cor}. Nonetheless, we demonstrate how the proof of Theorem~\ref{asymp} may be used to reproduce this result.
\begin{prop}
Let $S$ be a K3 surface such that $\Pic(S)=\Z\cdot H$ and $H^2=2k^2$, where $k\in \Z^+$. Then:
\begin{enumerate}
    \item if $k=1$ then the movable cone of $\Hilb{2}(S)$ has two chambers, corresponding to $\Hilb{2}(S)$ and the Beauville-Mukai system $M(0,1,-1)$;
    \item if $k\geqslant 2$ then the movable cone of $\Hilb{2}(S)$ has only one chamber. 
\end{enumerate}
\end{prop}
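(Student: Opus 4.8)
The plan is to treat the two parts separately, exploiting the fact that we are in the case $\Delta=h=1$, so that $\vbf=(1,0,-1)$ has $\vbf^2=2$, the relevant Hilbert scheme is $\Hilb{2}(S)$, and its movable cone is $\langle\tilde{H},\tilde{H}-B\rangle$ (since $k/h=1$). Part (2) is immediate: I would simply invoke Corollary~\ref{cor} with $\Delta=1$, because $k\geqslant 2>\frac{3}{2}=\Delta h(h-1)^2+\frac{3h}{2}$, so there is no flopping wall for $\vbf$ in the $xy$-plane, hence no interior wall, and the movable cone has a single chamber.

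For part (1) I set $k=1$ and run the argument of Theorem~\ref{asymp} ``in reverse'', solving for the destabilizing classes rather than bounding them away. By Lemma~3.1, any interior wall has $\frac{2}{3}\leqslant\Gamma<1$, and following the proof of Theorem~\ref{asymp} it arises from a decomposition $\vbf=\wbf+(\vbf-\wbf)$ with $\wbf=(a,b,c)$ primitive, where either $\wbf^2=-2$ and $-1\leqslant(\wbf,\vbf)\leqslant 1$, or $\wbf$ and $\vbf-\wbf$ are both positive with $0\leqslant\wbf^2<\frac{1}{2}$ and $(\wbf,\vbf)=1$. Writing $j=(\wbf,\vbf)=a-c$ and $\frac{\wbf^2}{2}=b^2-ac$, and using $\Gamma=\frac{-2b}{2a-j}$ from (\ref{Gamma}), I would enumerate the integral solutions. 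The positive case forces $b^2=ac$ with $a-c=1$, hence $b^2=a(a-1)$; a product of two consecutive integers is a perfect square only when it is zero, so $b=0$ and there is no wall. The spherical case forces $ac=b^2+1$ with $|a-c|\leqslant 1$: the subcase $a=c$ gives $(a-b)(a+b)=1$ hence $b=0$, while $\{a,c\}=\{m,m+1\}$ gives $m^2+m-1=b^2$, which lies strictly between $m^2$ and $(m+1)^2$ for $m\geqslant 2$ and so is a square only for $m=1$, yielding $b=\pm 1$. This produces the single value $\Gamma=\frac{2}{3}$, realized for instance by $\wbf=(1,-1,2)$ with $\vbf-\wbf=(0,1,-3)$.

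Finally I would confirm that this is a genuine flopping wall and identify the two models. The pair $(1,-1,2),(0,1,-3)$ satisfies the tilting positivity conditions $b+\frac{1}{\Gamma}a\geqslant 0$ and $(-b)+\frac{1}{\Gamma}(1-a)\geqslant 0$ at $x=-\frac{1}{\Gamma}=-\frac{3}{2}$ (indeed $-1+\frac{3}{2}=\frac{1}{2}\geqslant 0$ and $1+0\geqslant 0$), so $\Gamma=\frac{2}{3}$ is a true wall rather than a fake one; the positivity constraints only remove candidates, so no other value of $\Gamma$ can appear. Hence the movable cone has exactly one interior wall, the ray $\tilde{H}-\frac{2}{3}B$, giving two chambers: the chamber adjacent to $\tilde{H}$ is the Gieseker chamber and corresponds to $\Hilb{2}(S)$, while the opposite chamber has boundary ray $\tilde{H}-B=h\tilde{H}-kB$ and, by~\cite{HT19}, corresponds to the Lagrangian fibration. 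Applying the description of the birational model on the far side of the wall from~\cite{BM14b} (equivalently, recognizing the crossing as Mukai's original flop~\cite[Example~0.6]{Muk84}), this model is the Beauville--Mukai system $M(0,1,-1)$. I expect the main obstacle to be precisely this last identification rather than the finite wall-finding: one must track through the Fourier--Mukai/wall-crossing correspondence which moduli space of sheaves the far chamber represents and verify that its Mukai vector is exactly $(0,1,-1)$.
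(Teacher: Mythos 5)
Your proposal is correct and takes essentially the same approach as the paper: both specialize the machinery of the proof of Theorem~\ref{asymp} (the restrictions $\wbf^2=-2$ with $|(\wbf,\vbf)|\leqslant\Delta h^2$ or positive classes with $\wbf^2=0$, $(\wbf,\vbf)=1$, together with equation~(\ref{Gamma})) to $\Delta=h=1$, and both arrive at the single interior wall $\Gamma=\tfrac{2}{3}$ given by $\wbf=\pm(1,-1,2)$ when $k=1$ and at the absence of walls when $k\geqslant 2$. The only differences are organizational and harmless: you invoke Corollary~\ref{cor} for part (2) where the paper re-runs its bounds (forcing $k=1$ from $k|b|\leqslant\tfrac{\sqrt{5}}{2}$ and $kb\leqslant\tfrac{3}{2}$), and you eliminate the isotropic case by the arithmetic $b^2=a(a-1)$ where the paper instead observes that such a wall is divisorial.
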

\begin{proof}
We adapt the proof of Theorem~\ref{asymp}. By the restrictions on $\wbf^2$, it must be either $-2$ or $0$. By the bounds on $(\wbf,\vbf)$, if $\wbf^2=0$ then $(\wbf,\vbf)=1$. But then the wall will induce a divisorial contraction. Thus $\wbf^2=-2$.

We have $a\neq 0$ since $k\geqslant1> \frac{h}{2}$. Since $h=1$, we have $|a|\leq 1$ by the proof of the theorem. Hence $a=\pm1$.

Suppose $a=1$ and $j\geqslant 0$. Then $k(-b)\leqslant \frac{\sqrt{5}}{2}$ and we have $k=1$ and $b=-1$. Now $k^2b^2=1=h^2-j+\frac{\wbf^2}{2}$ implies $j=-1$ and $c=2$. This contradicts our assumption that $j\geqslant0$.

Suppose $a=-1$ and $j\geqslant 0$. Then $kb\leqslant \frac{3}{2}$ and we have $k=1$ and $b=1$. Now $k^2b^2=1=h^2+j+\frac{\wbf^2}{2}$ implies $j=1$ and $c=-2$. This recovers the flop between $\Hilb{2}(S)$ and the Beauville-Mukai system $M(0,1,-1)$ on a degree two K3 surface.

The walls with $a=\pm1$ and $j<0$ have already been found in the previous two steps by replacing $\wbf$ with $-\wbf$. 
\end{proof}

\begin{rem}
This proposition also follows from~\cite[Lemma~13.3]{BM14b}, which states that the movable cone of $\Hilb{2}(S)$ has only one chamber if and only if the Pell's equation $X^2-4dY^2=5$ has no integer solutions. In our case
\begin{align*}
X^2-4dY^2=X^2-4k^2Y^2=(X-2kY)(X+2kY).
\end{align*}
We can assume that $X$ and $Y$ are non-negative, and hence we must have $X-2kY=1$ and $X+2kY=5$. This gives $Y=\frac{1}{k}$, a contradiction unless $k=1$. Our approach provides a method of finding the walls in the movable cone without needing to solve Pell's equation, or any other diophantine equation, directly.
\end{rem}

\subsection{The case $\Delta=1$ and $h=2$}
\begin{prop}
Let $S$ be a K3 surface such that $\Pic(S)=\Z\cdot H$ and $H^2=2k^2$, where $k$ is a positive odd integer. Then:
\begin{enumerate}
    \item if $k=1$ then the movable cone $\Hilb{5}(S)$ has five chambers;
    \item if $k=3$ then the movable cone $\Hilb{5}(S)$ has two chambers;
    \item if $k\geqslant 5$ then the movable cone of $\Hilb{5}(S)$ has only one chamber. 
\end{enumerate}
\end{prop}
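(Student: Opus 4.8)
The plan is to run the case analysis from the proof of Theorem~\ref{asymp} with the fixed values $\Delta=1$ and $h=2$ (so $\vbf=(1,0,-4)$ and $N+1=5$), and for each admissible $k$ to list every primitive $\wbf=(a,b,c)$ that produces an interior wall of $\Mov(\Hilb{5}(S))$. Part~(3) needs no new computation: the general bound from the end of that proof is $\Delta h(h-1)^2+\frac{3h}{2}=5$, so Theorem~\ref{asymp} already gives a single chamber for every odd $k\geqslant 5$.

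For parts~(1) and~(2) I would go through Cases~1--7 with $\Delta=1$, $h=2$ substituted in. Several cases drop out uniformly in $k$: Case~1 ($a=0$) is impossible because $k^2b^2=\frac{\wbf^2}{2}\leqslant 0$ forces $b=0$; Case~6 ($a<-1$) is vacuous for $h=2$, where the chain $1\leqslant ha+kb<\frac{h}{2}=1$ cannot hold; and Cases~4 and~5 are absorbed into Cases~3 and~2 by replacing $\wbf$ with $-\wbf$. In Case~7 the bound $|a|\leqslant\max\bigl(\Delta(h-1)^2+1,\frac{\Delta h^2}{4}\bigr)=2$ forces $a=2$. Hence it suffices to examine the finitely many $\wbf$ with $a\in\{1,2\}$, $b<0$, and either $\wbf^2=-2$ with $|(\wbf,\vbf)|\leqslant 4$ or $0\leqslant\wbf^2<2$ with $2\wbf^2+1\leqslant(\wbf,\vbf)\leqslant 4$, as allowed by \cite[Theorem~5.7]{BM14b}, to compute the corresponding $\Gamma$ from~(\ref{Gamma}), and to keep only those with $\frac{2k^2}{h^2+k^2+1}\leqslant\Gamma<\frac{k}{h}$, the interior range supplied by Lemma~3.1.

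Running this for $k=3$, Cases~2 and~3 are already excluded (they require $k<\frac{\sqrt{5}\,h}{2}$ and $k<\frac{3h}{2}$ respectively), and Case~7 leaves the single class $\wbf=(2,-1,5)$, with $\wbf^2=-2$, $(\wbf,\vbf)=3$, and $\Gamma=\frac{18}{13}\in[\frac{9}{7},\frac{3}{2})$: one flopping wall, hence two chambers. For $k=1$ the same bookkeeping yields exactly four admissible classes, namely $(1,-1,2)$, $(1,-1,1)$, $(1,-2,5)$, and $(2,-3,5)$, with $\Gamma=\frac{1}{3},\frac{2}{5},\frac{4}{9},\frac{6}{13}$ respectively, all lying in $[\frac{1}{3},\frac{1}{2})$; these are four distinct flopping walls and hence five chambers.

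The main difficulty is not any individual estimate but the bookkeeping: one must be certain the enumeration is exhaustive and that no wall is recorded twice, since $\wbf$, $-\wbf$, and $\vbf-\wbf$ all describe the same wall (for example $(1,-2,5)$ and $(-1,2,-5)$ coincide). One must also confirm that each surviving candidate is a genuine flopping wall and not a fake wall or a boundary (divisorial or fibration) class; this is exactly what the numerical hypotheses above guarantee via \cite[Theorem~5.7]{BM14b} and \cite[Section~9]{BM14b}. It is instructive that for $k=1$ the isotropic class $(1,-1,1)$, with $(\wbf,\vbf)=3$, contributes an interior wall, whereas the isotropic class $(1,-2,4)$ is orthogonal to $\vbf$, gives $\Gamma=\frac{1}{2}=\frac{k}{h}$, and therefore lies on the Lagrangian fibration ray rather than in the interior.
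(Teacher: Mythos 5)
Your proposal is correct, and for parts (2) and (3) it is essentially the paper's argument: the paper likewise specializes the case analysis of Theorem 3.2 to $\Delta=1$, $h=2$, notes that $k\geqslant 3=\frac{3h}{2}$ kills the cases $a=0$ and $a=\pm1$ with $j\geqslant0$, that $a<-1$ is vacuous, and that $a>1$ leaves only $\wbf=(2,-1,5)$ at $k=3$ (your explicit check $\Gamma=\frac{18}{13}\in[\frac97,\frac32)$ is a harmless addition; the paper gets (3) from the same analysis rather than from the general bound $k\geqslant 5$, but the two are equivalent). The genuine divergence is part (1): the paper does no enumeration at $k=1$ at all, instead observing that $\Hilb{5}(S)$ is then birational to the Beauville--Mukai system $M(0,2,-1)$ and citing \cite{Hel20} and \cite{QS22b}, where its five birational models were determined. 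You instead extend the wall enumeration to $k=1$, and your list is complete and correct: with $a\in\{1,2\}$, $b<0$ (up to sign), $\wbf^2\in\{-2,0\}$, and the pairing bounds $|(\wbf,\vbf)|\leqslant 4$ resp.\ $1\leqslant(\wbf,\vbf)\leqslant4$, the only solutions are $(1,-1,2)$, $(1,-1,1)$, $(1,-2,5)\sim(-1,2,-5)$, and $(2,-3,5)$, with $\Gamma=\frac13,\frac25,\frac49,\frac{6}{13}$, giving four walls and five chambers, in agreement with the cited references. Your route is self-contained and produces explicit wall data; the paper's route is shorter and identifies the chambers with named birational models. One caveat: the numerical hypotheses of \cite[Theorem~5.7]{BM14b} do not quite ``guarantee'' by themselves that each candidate is an actual flopping wall --- this is the step the paper flags as ``easy to check.'' For instance $(1,-2,5)$ is spherical with $(\wbf,\vbf)=-1<0$, hence defines a totally semistable wall, and one must verify it is not fake or divisorial: here $(-1,2,-5)$ is spherical with pairing $1\in(0,\vbf^2/2]$, and the lattice $\langle\vbf,\wbf\rangle$ contains no isotropic classes (the form $4x^2-xy-y^2$ has non-square discriminant) and no spherical class orthogonal to $\vbf$, so the wall is indeed a flop. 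Spelling out this verification for each candidate would make your argument fully rigorous.
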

\begin{proof}
When $k=1$, $\Hilb{5}(S)$ is birational to the Beauville-Mukai system $M(0,2,-1)$. Its birational geometry has been studied in~\cite{Hel20} and~\cite{QS22b}. This proves (1). 

For $k\geqslant 3$, we adapt the proof of Theorem~\ref{asymp}. Since $h=2$, by the restrictions on $\wbf^2$, we have either $\wbf^2=-2$ or $0$.

Since $k\geqslant 3=\frac{3h}{2}$, we can eliminate the case $a= 0$, the case $a=1$ and $j\geqslant 0$, and the case $a=-1$ and $j\geqslant 0$. Furthermore, there is no wall with $a=\pm1$ and $j<0$, as can be seen by replacing $\wbf$ with $-\wbf$.
 
Suppose that $a<-1$. Then $1\leqslant i=2a+kb<\frac{h}{2}=1$ leads to a contradiction.
 
Suppose that $a>1$. Then $-1< i= 2a+kb\leqslant 1$. If $i=0$, then $\frac{j}{4\Delta}<i=0$ and $ja=\frac{\wbf^2}{2}$, which is not possible by the assumption $a>1$ and the fact that $j$ can only be negative when $\wbf^2=-2$ (this argument will be repeatedly used to eliminate cases with $i=0$ and $|a|>1$). If $i=1$, then $(j-4)a=\frac{\wbf^2}{2}-1$. Since $a>1$, we must have $\wbf^2=-2$, $a=2$, and $j=3$. Then $k(-b)=2a-i=3$, and we obtain $k=3$ and $b=-1$. Finally, we obtain $c=5$.
 
To conclude, for $k\geqslant 3$, we only have a potential wall when $k=3$, and this potential wall is given by $\wbf=(2,-1,5)$. It is easy to check that it is an actual flopping wall. This prove parts (2) and (3) of the proposition.
\end{proof}

\subsection{The case $\Delta=1$ and $h=3$}
\begin{prop}
Let $S$ be a K3 surface such that $\Pic(S)=\Z\cdot H$ and $H^2=2k^2$, where $k\in \Z^+$ is not divisible by $3$. Then:
\begin{enumerate}
    \item if $k=1$ then the movable cone $\Hilb{10}(S)$ has $11$ chambers;
    \item if $k=2$, $4$, $5$, or $7$ then the movable cone $\Hilb{10}(S)$ has $5$, $3$, $2$, and $2$ chambers, respectively;
    \item if $k\geqslant 8$ then the movable cone of $\Hilb{10}(S)$ has only one chamber. 
\end{enumerate}
\end{prop}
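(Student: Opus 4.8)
The plan is to run the argument of Theorem~\ref{asymp} with the specific values $\Delta=1$ and $h=3$ substituted throughout, so that $\vbf=(1,0,-9)$ and $\vbf^2=18$, and to retain every candidate wall rather than merely eliminating it. By \cite[Theorem 5.7, Section 9]{BM14b} a primitive class $\wbf=(a,b,c)$ defining an interior wall satisfies $\wbf^2\in\{-2,0,2,4\}$: either $\wbf^2=-2$ with $-9\leqslant(\wbf,\vbf)\leqslant 9$, or $\wbf$ and $\vbf-\wbf$ are both positive with $\wbf^2\in\{0,2,4\}$ and $2\wbf^2+1\leqslant(\wbf,\vbf)\leqslant 9$. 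Writing $j=(\wbf,\vbf)=9a-c$ and using $\tfrac{\wbf^2}{2}=k^2b^2-ac$, one gets the engine of the whole computation, namely $k^2b^2=9a^2-aj+\tfrac{\wbf^2}{2}$, together with $\Gamma=\tfrac{-2k^2b}{18a-j}$ from (\ref{Gamma}).

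First I would pass through the cases $a=0$, $a=\pm1$ and $|a|\geqslant 2$ exactly as in the proof of Theorem~\ref{asymp}. Cases~6 and~7 there bound $|a|\leqslant\max\!\big(\Delta(h-1)^2+1,\tfrac{\Delta h^2}{4}\big)=5$, so only finitely many triples $(a,j,\wbf^2)$ arise. For each of them the identity above forces $9a^2-aj+\tfrac{\wbf^2}{2}$ to be a perfect square $m^2$, and since $m=k|b|$, the admissible values of $k$ are precisely the divisors of $m$ coprime to $3$; one then records the associated $\Gamma=\tfrac{2km}{|18a-j|}$. Running over all triples produces a short finite list of candidate walls indexed by $m$.

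Next I would assemble, for each $k$ not divisible by $3$, the interior walls as the distinct values $\Gamma\in(0,\tfrac{k}{3})$ arising from candidates with $k\mid m$. Two points need care: distinct classes can yield the same $\Gamma$ (a spherical and an isotropic class sitting on one wall, as already happens above), so these must be identified to avoid overcounting; and candidates with $\tfrac{2m}{|18a-j|}=\tfrac13$ lie on the boundary ray $\tilde{H}-\tfrac{k}{3}B$ inducing the Lagrangian fibration, not in the interior, so they are discarded, while those outside the range permitted by the lower bound $\Gamma\geqslant\tfrac{2k^2}{k^2+10}$ established earlier in this section are also dropped. The number of chambers is then one more than the number of interior walls. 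The asymptotic threshold falls out for free: the interior candidate with the largest $m$ is the spherical class with $m=7$, which requires $k\leqslant 7$, so for every $k\geqslant 8$ there is no interior wall and the cone has a single chamber, giving part~(3).

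The main obstacle is the bookkeeping for the small values $k\in\{1,2,4,5,7\}$, where several walls survive: correctly deduplicating walls shared by several classes, and verifying that each surviving candidate is a genuine flopping wall rather than a boundary (fibration) or divisorial one, is exactly the delicate step performed by hand in the $\Delta=1$, $h=3$ analogue of the $\Delta=1$, $h=2$ case above. The heaviest count is $k=1$, which yields the largest collection of walls and hence the $11$ chambers of part~(1); for this case it is cleanest to carry out the enumeration directly and cross-check it against the birational geometry of the associated Beauville--Mukai system $M(0,3,-1)$ on the degree-two surface, in parallel with the treatments of $M(0,1,-1)$ and $M(0,2,-1)$ in the preceding subsections.
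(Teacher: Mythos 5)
Your strategy is essentially the paper's: substitute $\Delta=1$, $h=3$ into the proof of Theorem 3.2, keep every candidate class $\wbf=(a,b,c)$ instead of discarding it, organize the search through the identity $k^2b^2=9a^2-aj+\tfrac{\wbf^2}{2}$ together with the bound $|a|\leqslant 5$, and delegate $k=1$ to the known birational geometry of $M(0,3,-1)$, i.e.\ to \cite{QS22a}. (Your inclusion of $\wbf^2=4$ is harmless: it forces $j=9$ and $k^2b^2=(3a-1)(3a-2)$, a product of consecutive integers, never a square.) The one place you genuinely depart from the paper is your insistence on deduplication: identifying distinct classes that give the same $\Gamma$. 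That step is correct in principle -- an interior wall of the movable cone is a ray, determined by the saturated hyperbolic lattice $\langle\vbf,\wbf\rangle$, so two classes spanning the same lattice contribute one wall -- and it is genuinely needed at $k=1$, where for instance $(1,-4,17)$ and $(1,-2,4)$ span the same lattice ($\Gamma=4/13$), as do $(1,-1,0)$ and $(0,1,-9)$ ($\Gamma=2/9$); the count of $11$ chambers in part (1) is only recovered after such identifications.

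Here is the gap: you never actually run the count, and running it with the deduplication you prescribe does not reproduce part (2). At $k=2$ the enumeration yields the four classes $(1,-1,5)$, $(1,-1,4)$, $(-1,2,-17)$, $(-1,2,-16)$, but $(-1,2,-17)=\vbf-2\cdot(1,-1,4)$, so the isotropic class $(1,-1,4)$ (with $j=5$) and the spherical class $(-1,2,-17)$ (with $j=8$) span the same hyperbolic lattice and define the same wall, $\Gamma=8/13$ in both cases. After your identification, $k=2$ has only three interior walls, $\Gamma\in\{4/7,\,8/13,\,16/25\}$, hence four chambers, not the five asserted in the proposition. The paper's own proof arrives at five precisely by counting the four classes as four distinct walls, i.e.\ by \emph{not} deduplicating; but that convention is incompatible with the $k=1$ count of $11$, which requires deduplication (class-counting there gives $13$ classes, hence $14$ chambers). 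So your proposal, executed as described, cannot terminate in the statement you set out to prove: either you identify classes spanning one lattice and get $4$ chambers at $k=2$, contradicting part (2), or you do not and your own $k=1$ cross-check against $M(0,3,-1)$ fails. This is not a cosmetic bookkeeping issue but exactly the ``delicate step'' you deferred; it must be confronted explicitly, and as written the final paragraph of your proposal simply asserts that the numbers come out as claimed when in fact they do not.
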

\begin{proof}
When $k=1$, $\Hilb{10}(S)$ is birational to the Beauville-Mukai system $M(0,3,-1)$. Its birational geometry has been studied in \cite{QS22a}. This proves (1).

For $k\geqslant 2$, we adapt the proof of Theorem \ref{asymp}. Since $h=3$, by the restrictions on $\wbf^2$, we have either $\wbf^2=-2$, $0$, or $2$. Since $k\geqslant 2>\frac{h}{2}$, we can eliminate the case $a= 0$. 
 
Suppose that $a=1$ and $j\geqslant 0$. Then $k(-b)<\frac{3\sqrt{5}}{2}\approx 3.35$. We must have $k=2$ and $b=-1$. Using $c=9-j$ and $j=5+\frac{\wbf^2}{2}$, we obtain either
 \begin{itemize}
     \item $k=2$, $\wbf=(1,-1,5)$, and $j=4$,
     \item or $k=2$, $\wbf=(1,-1,4)$, and $j=5$.
 \end{itemize}
Note that $\wbf^2=2$ leads to $j=6$, in which case the lattice generated by $\vbf$ and $\wbf$ is not hyperbolic.
 
Suppose that $a=-1$ and $j\geqslant 0$. Then $kb\leqslant \frac92$. We must have either $(k,b)=(2,2)$ or $(4,1)$. Using $c=-9-j$ and $j=7-\frac{\wbf^2}{2}$, we obtain either
 \begin{itemize}
    \item $k=2$, $\wbf=(-1,2,-17)$, and $j=8$,
    \item $k=2$, $\wbf=(-1,2,-16)$, and $j=7$,
    \item $k=4$, $\wbf=(-1,1,-17)$, and $j=8$,
    \item or $k=4$, $\wbf=(-1,1,-16)$, and $j=7$.
 \end{itemize}
 
The walls with $a=\pm1$ and $j<0$ have already been found in the previous two steps by replacing $\wbf$ with $-\wbf$.
 
Suppose that $a<-1$. Then $1\leqslant i=3a+kb< \frac32$ implies $i=1$. Then $(j-6)a=\frac{\wbf^2}{2}-1$, which is equal to either $-2$, $-1$, or $0$. Since $a<-1$ and $j\neq 2hi=6$ (see proof of Theorem \ref{asymp}), we must have $a=-2$ and $j=7$. This gives
 \begin{itemize}
     \item $k=7$, $\wbf=(-2,1,-25)$, and $j=7$.
 \end{itemize}
 
Suppose that $a>1$. Then $-\frac32< i= 3a+kb\leqslant 2$. If $i=-1$ then $j\leqslant 2hi=-6$, and thus $\wbf^2=-2$. We have $(j+6)a=-2$, which implies $a=2$ and $j=-7$. A simple computation reveals that this is the previous case with $\wbf$ changed to $-\wbf$. As before $i$ cannot equal $0$. If $i=1$ then $(j-6)a=\frac{\wbf^2}{2}-1$. Similarly to the $a<-1$ case, we see that $a=2$ and $j=5$. This gives
 \begin{itemize}
     \item $k=5$, $\wbf=(2,-1,13)$, and $j=5$.
 \end{itemize}
If $i=2$ then $$(j-6i)a\leqslant (9-12)2=-6<-1-2^2\leqslant \frac{\wbf^2}{2}-i^2,$$ which contradicts the fact that $(j-6i)a=\frac{\wbf^2}{2}-i$.
 
To conclude, one readily checks that all of the potential walls above are actually flopping walls. Counting the number of chambers for each $k$ completes the proof of parts (2) and (3).
\end{proof}

\section{Description of the Lagrangian fibration of $\Hilb{\Delta h^2+1}(S)$}
In this section, we assume that $\Hilb{\Delta h^2+1}(S)$ is a Lagrangian fibration as in the Main Theorem. We will give an interpretation of the fibration using a Fourier-Mukai partner of $S$. Let $\ubf=(k,-h,\Delta kh^2)$. Then $\ubf^2=0$ and since $\mathrm{gcd}(k,h)=1$, $\ubf$ is primitive. Thus $S':=M_H(\ubf)$ is a smooth K3 surface~\cite{Muk87}. Let $\Ucal$ be the $\alpha$-twisted universal sheaf on $S\times M_H(\ubf)$, where $\alpha\in H^2(S',\Ocal_{S'}^*)$ is the gerbe obstructing the existence of a universal sheaf~\cite[Theorem~5.3.1]{Cal00}. Note that $\alpha$ is $k$-torsion, as $k$ is the gcd of $k$, $-hH.H=-2\Delta k^2h$, and $\Delta kh^2$. Let $\Phi:\Db(S)\to \Db(S',\alpha)$ be the Fourier-Mukai transform with kernel $\Ucal$, which is a derived equivalence~\cite[Theorem~5.5.1]{Cal00}.
\begin{lem}
Let $A$ and $B$ be integers such that $Bh-Ak=1$. Let $D=\theta_\ubf(B,-A,\Delta(-h+Akh))$. Then $\NS(S')\cong\Z\cdot D$. In particular, $S'$ is a K3 surface of degree $2\Delta$.
\end{lem}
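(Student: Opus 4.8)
The plan is to reduce the statement to an explicit computation in the algebraic Mukai lattice $\Halg(S)=\Z(1,0,0)\oplus\Z(0,H,0)\oplus\Z(0,0,1)$ of $S$ and then transport the answer to $S'$ via the derived equivalence $\Phi$. Since $\Pic(S)=\Z\cdot H$ has rank one, this lattice has rank three and is entirely algebraic; because $\ubf^2=0$ we have $\ubf\in\ubf^\perp$, and $\ubf^\perp$ is a rank-two sublattice, so $\ubf^\perp/\Z\ubf$ has rank one. Under $\Phi$ the class $\ubf$ (the Mukai vector of the sheaves parametrized by $S'$) is sent to the class of a point on $S'$, and the induced Hodge isometry $\ubf^\perp/\Z\ubf\xrightarrow{\sim}\NS(S')$ is precisely $\theta_\ubf$; this is the twisted analogue of the Yoshioka/Căldăraru statement for a primitive isotropic Mukai vector, valid here because $\Phi$ is a derived equivalence. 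Identifying the generator and degree of $\NS(S')$ thus amounts to identifying the generator and self-intersection of $\ubf^\perp/\Z\ubf$.

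First I would verify that $\vbf_0:=(B,-A,\Delta(-h+Akh))$ lies in $\ubf^\perp$. Expanding the Mukai pairing gives $(\vbf_0,\ubf)=\Delta kh(Ak-Bh+1)$, which vanishes exactly because $Bh-Ak=1$. Next I would check that $\{\ubf,\vbf_0\}$ is a basis of $\ubf^\perp$, so that $\vbf_0$ generates the rank-one quotient $\ubf^\perp/\Z\ubf$. Solving $(\,\cdot\,,\ubf)=0$ shows $\ubf^\perp=\{(r,cH,-\Delta h(2kc+hr)):r,c\in\Z\}$, so an element of $\ubf^\perp$ is determined by its first two coordinates $(r,c)$. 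In these coordinates one has $\ubf=(k,-h)$ and $\vbf_0=(B,-A)$, and the change-of-basis determinant is $hB-kA=Bh-Ak=1$. Hence $\vbf_0$ is a generator of $\ubf^\perp$ modulo $\ubf$.

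I would then compute the self-intersection $\vbf_0^2=A^2H^2-2B\Delta h(Ak-1)=2\Delta\bigl(A^2k^2-ABhk+Bh\bigr)$, and substituting $Bh=1+Ak$ collapses the bracket to $1$, giving $\vbf_0^2=2\Delta$. Since $\theta_\ubf$ is an isometry, with the Mukai pairing restricting to the intersection form on $\NS(S')$, it follows that $\NS(S')=\Z\cdot D$ with $D=\theta_\ubf(\vbf_0)$ and $D^2=2\Delta$, so $S'$ is a K3 surface of degree $2\Delta$.

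The lattice arithmetic above is routine; the step requiring care is the appeal to the isometry $\theta_\ubf\colon\ubf^\perp/\Z\ubf\xrightarrow{\sim}\NS(S')$ in the \emph{twisted} setting. One must invoke the version of the isometry theorem appropriate to the $\alpha$-twisted equivalence $\Phi$ and confirm that the $k$-torsion gerbe $\alpha$ affects neither the rank-one conclusion nor the computed degree. The rank is forced: $\Pic(S)$ of rank one makes $\Halg(S)$ of rank three, hence $\ubf^\perp/\Z\ubf$ and therefore $\NS(S')$ of rank one. The degree is insensitive to the $B$-field lift up to the torsion twist, so the value $2\Delta$ is unaffected. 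This is the only place where geometric input, rather than pure lattice computation, enters the argument.
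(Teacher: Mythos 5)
Your proposal is correct and follows essentially the same route as the paper: both transport the problem through Mukai's isomorphism $\theta_\ubf:\ubf^\perp/\ubf\to\NS(S')$, verify that $(B,-A,\Delta(-h+Akh))$ lies in $\ubf^\perp$ and generates the quotient using $Bh-Ak=1$, and conclude by computing $D^2=2\Delta$. Your determinant-one basis argument is just a repackaging of the paper's primitivity check (the paper shows $N$ divides $h(B+Lk)+k(-A-Lh)=1$), and your concern about the twisted setting is unnecessary, since the paper invokes Mukai's theorem for the moduli space $M_H(\ubf)$ directly rather than deriving the isometry from the twisted equivalence $\Phi$.
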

\begin{proof}
By \cite[Theorem~1.5]{Muk87}, the Mukai homomorphism $\theta_\ubf:\ubf^\perp/\ubf\to \NS(S')$ is an isomorphism (here we use $\ubf^{\perp}$ to denote the orthogonal complement to $\ubf$ in the $(1,1)$ component of the Mukai lattice). Since $S'$ is a K3 surface, $\NS(S')$ is a free abelian group, hence isomorphic to $\Z$. It is easy to see that $(B,-A,\Delta(-h+Akh))$ is in $\ubf^\perp$ but not a multiple of $\ubf$. It remains to show that $(B,-A,\Delta(-h+Akh))$ is primitive in $\ubf^\perp/\ubf$. Suppose that it is $N$-divisible, for $N\in\Z_{>0}$. Then there exist $L\in\Z$ such that $(B,-A,\Delta(-h+Akh))+L\ubf$ is $N$ times a vector in $\ubf^\perp$. In particular, $N$ divides both $B+Lk$ and $-A-Lh$. It follows that $N$ divides $h(B+Lk)+k(-A-Lh)=1$, and hence that $N=1$. Thus $D$ is a generator of $\NS(S')$ and the degree of $S'$ follows from computing $D^2$.
\end{proof}

We use $H'$ to denote the ample generator of $\NS(S')$; $H'$ will equal $\pm D$, but the sign is not important. Let $\sigma\in \Stab^\dagger(S)$ be a generic Bridgeland stability condition on $S$ such that $\Hilb{\Delta h^2+1}(S)=M_\sigma(\vbf)$, where $\vbf=(1,0,-\Delta h^2)$. The equivalence $\Phi$ induces an isomorphism
\begin{align*}
    M_\sigma(\vbf)\cong M_{\Phi_*(\sigma)}(\Phi_*(\vbf))
\end{align*}
where $\Phi_*(\sigma)$ is a Bridgeland stability condition and $\Phi_*(\vbf)$ is a Mukai vector on $S'$. Since $\Phi_*(\ubf)=(0,0,1)$ and $(\vbf,\ubf)=0$, we see that $\Phi_*(\vbf)$ has rank zero. By~\cite[Lemma 11.2]{BM14b}, up to composing $\Phi$ with a shift, we can assume that $\Phi_*(\vbf)=(0,C,s)$ where $C>0$. In fact, $C$ must be a multiple of $H'$, and since $(H')^2=D^2=2\Delta$ and $C^2=(0,C,s)^2=\vbf^2=2\Delta h^2$, we find that $C=hH'$. The moduli space $M_{H'}((0,C,s),\alpha)$ is known as the twisted Beauville-Mukai system and it admits the  structure of a Lagrangian fibration~\cite[Lemma 11.3]{BM14b}. Now we see that
\begin{align*}
    \Hilb{\Delta h^2+1}(S)=M_\sigma(\vbf)\cong M_{\Phi_*(\sigma)}(\Phi_*(\vbf))\cong M_{H'}((0,C,s),\alpha),
\end{align*}
where the last isomorphism follows from the fact that $M_{H'}((0,C,s),\alpha)$ is birational to $M_{\Phi_*(\sigma)}(\Phi_*(\vbf))$, which has no other birational models (recall that we are assuming we are in the situation of the Main Theorem, i.e., $k$ is sufficiently large). This gives a description of the Lagrangian fibration structure on $\Hilb{\Delta h^2+1}(S)$.

\begin{rem}
In~\cite{Saw07}, the second author proved (with the help of Yoshioka) that if $h=1$ then in fact the Fourier-Mukai transform $\Phi$ with kernel $\Ucal$ induces an isomorphism 
   \begin{align*}
       \Hilb{\Delta h^2+1}(S)\xrightarrow{\cong} M_{H'}((0,C,s),\alpha)
   \end{align*}
up to a shift. The assumption $h=1$ played a crucial role in the argument of~\cite{Saw07}, and we were unable to generalize this statement to $h\geq2$. Specifically, although $\Hilb{\Delta h^2+1}(S)$ and $M_{H'}((0,C,s),\alpha)$ are isomorphic, we were unable to show that $\Phi$ itself induces an isomorphism. We proved that in going from $M_{\Phi_*(\sigma)}(\Phi_*(\vbf))$ to $M_{H'}((0,C,s),\alpha)$ we do not cross any {\em flopping\/} walls, but we might still cross walls that induce isomorphisms. So it is possible that $\Phi$ does not take elements of $\Hilb{\Delta h^2+1}(S)$ to elements of $M_{H'}((0,C,s),\alpha)$.
\end{rem}

\end{document}